\DeclareMathAlphabet{\pazocal}{OMS}{zplm}{m}{n}
\numberwithin{equation}{section}
\newtheorem{theorem}{Theorem}[section]
\newtheorem{proposition}[theorem]{Proposition}
\newtheorem{lemma}[theorem]{Lemma}
\newtheorem{corollary}[theorem]{Corollary}
\theoremstyle{definition}
\newtheorem{definition}[theorem]{Definition}
\newtheorem{remark}[theorem]{Remark}
\newtheorem{conjecture}[theorem]{Conjecture}
\newcommand{\ord}{\operatorname{ord}}
\newcommand {\Z} {\mathbb{Z}}
\newcommand {\N} {\mathbb{N}}
\newcommand{\Ponepow}{\widehat{\mathbb{P}_{1}}}
\newcommand{\Pzero}{\mathbb{P}_{0}}
\begin{document}

\title[Automatic sequences]{A note on multiplicative automatic
  sequences, II}

\author{Oleksiy Klurman}
\email{lklurman@gmail.com}
\author{Pär Kurlberg}
\email{kurlberg@math.kth.se}\address{Department of Mathematics, KTH Royal Institute of Technology, Stockholm}

\date{\today,  \currenttime}
\begin{abstract}
  We prove that any $q$-automatic multiplicative 
function $f:\mathbb{N}\to\mathbb{C}$ either essentially coincides with
 a Dirichlet
  character, or vanishes on all sufficiently large primes. This confirms a strong form of a conjecture of J. Bell, N. Bruin, and
  M. Coons.
\end{abstract}	
\maketitle

\section{Introduction}
Automatic sequences play important role in computer science and number
theory. For a detailed account of the theory and applications we refer
the reader to the classical monograph~\cite{AS}. One of the
applications of such sequences in number theory stems from a
celebrated theorem of Cobham~\cite{COB}, which asserts that in order
to show the transcendence of the power series $\sum_{n\ge 1}f(n)z^n$
it is enough to establish that the function$f:\mathbb{N}\to\mathbb{C}$
is {\it not} automatic. In this note, rather than working within the
general set up, we confine ourselves to functions with the range in
$\mathbb{C}.$ There are several equivalent definitions of automatic
(or more precisely, $q$-automatic) sequences. It will be convenient
for us to use the following one.
\begin{definition} The sequence $f:\mathbb{N}\to\mathbb{C}$ is called
  $q$-automatic if the $q$-kernel of it, defined as a set of
  subsequences  
\[K_q(f)=\left\{ \{f(q^in+r\}_{n \ge 0} \vert\ i\ge 1,0\le r\le q^i-1\right\},\]
is finite. 
\end{definition}
We remark that any $q-$automatic sequence takes only finitely many
values, since it is a function on the states of finite automata.  
A
function $f:\mathbb{N}\to\mathbb{C}$ is called multiplicative if $f(mn)=f(m)f(n)$ for all pairs $(m,n)=1.$ The
question of which multiplicative functions are $q$-automatic attracted considerable attention of several authors including~\cite{Yazd},
\cite{Puchta}, \cite{BBC}, \cite{Puchta1}, \cite{Li}, \cite{KK}  and \cite{AG}.
In particular, the following conjecture was made in \cite{BBC}.
\begin{conjecture}[Bell-Bruin-Coons]\label{Conj}
For any multiplicative $q$-automatic function
$f:\mathbb{N}\to\mathbb{C}$ there exists an eventually periodic function
$g:\mathbb{N}\to\mathbb{C},$ such that $f(p)=g(p)$ for all primes $p.$  
\end{conjecture}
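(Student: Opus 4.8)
The plan is to exploit the tension between two rigid structures that $f$ must simultaneously satisfy: automaticity, which forces a strong finiteness/periodicity in the base-$q$ digit expansion, and multiplicativity, which couples the values of $f$ across all of $\N$ via the primes. First I would record the structural consequences of automaticity. Since $K_q(f)$ is finite, the sequence $f$ is generated by a finite automaton; in particular, for each fixed prime $p \nmid q$, the behaviour of $f$ along the subsequence $n \mapsto f(p^k)$ and more generally along arithmetic progressions with common difference a power of $q$ is ultimately governed by a finite semigroup of transition maps. The key extraction I would aim for is: the restriction of $f$ to integers coprime to $q$ must, after passing to a suitable progression, look like a (completely) multiplicative function that is periodic modulo some power of $q$ — i.e.\ essentially a Dirichlet character modulo a power of $q$ — OR $f$ must vanish identically on a positive-density set of primes in a way that, combined with multiplicativity, propagates to \emph{all} large primes.

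Second, I would handle the primes dividing $q$ separately and trivially: there are finitely many of them, so their values can be absorbed into the ``essentially coincides'' clause. The real content is the primes $p \nmid q$. Here the main step is to show that the multiset $\{f(p) : p \nmid q \text{ prime}\}$ is eventually constrained to a single value on each residue class modulo some modulus $Q = q^m$. The mechanism I would use is the following: by automaticity, the generating function $F(z) = \sum_n f(n) z^n$ is algebraic over $\mathbb{F}_q(z)$-type considerations after reduction, or — more robustly in characteristic zero — the Dirichlet series $\sum_n f(n) n^{-s}$ has an Euler product $\prod_p (1 + f(p)p^{-s} + \dots)$ whose analytic behaviour must be compatible with the (very restrictive) analytic behaviour forced by finiteness of the $q$-kernel. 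Concretely, automaticity implies subpolynomial growth of partial sums with a digit-driven fractal structure; a multiplicative function with such partial sums is, by an inverse theorem in the spirit of Halász/Delange type dichotomies (or the more elementary classification available for automatic multiplicative functions, e.g.\ as in \cite{KK}, \cite{Li}), either a Dirichlet character times a function supported away from large primes, or has its nonzero prime values pinned down to a periodic pattern.

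The cleanest route, and the one I would pursue as the backbone of the argument, is a dichotomy on whether $f(p) = 0$ for infinitely many primes $p \nmid q$. \emph{Case 1:} $f(p) \neq 0$ for all sufficiently large $p \nmid q$. Then I would show that the finitely many possible ``tail states'' of the automaton, read along the progressions $p, p+q^m, p+2q^m, \dots$, force $f(p)$ to depend only on $p \bmod q^m$ for some fixed $m$; combined with multiplicativity (to control $f$ on prime powers and products) this yields that $f$ agrees with a Dirichlet character modulo $q^m$ on all integers coprime to $q$, hence $g$ can be taken eventually periodic with period $q^m$. \emph{Case 2:} $f(p) = 0$ for infinitely many $p \nmid q$. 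Then I would argue that the set $S = \{p : f(p) \neq 0\}$ is so sparse that the induced structure on the automaton is inconsistent with a \emph{finite} $q$-kernel unless $S$ is finite: roughly, a nonzero value at an isolated large prime $p$ creates, via multiplicativity, a nonzero value at $pn$ for all $n$ coprime to $p$, and these ``spikes'' at multiples of $p$ cannot be produced by a finite automaton unless they are periodic, which they are not if $p$ is large and isolated. Formalizing this requires care — this is the step I expect to be the main obstacle — but it can be done by looking at the $q$-kernel elements indexed by residues $r$ that hit a multiple of $p$ in a low digit position and showing a contradiction with finiteness via a counting/pigeonhole argument on digit expansions, or by invoking the known classification of automatic completely multiplicative functions and reducing the merely multiplicative case to it by factoring out the $2$-part and odd-part behaviour.

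The hard part, as indicated, will be Case 2: ruling out exotic multiplicative automatic functions whose support among primes is infinite but sparse. I expect the decisive input there to be a combinatorial rigidity lemma for automatic sequences — something like ``if an automatic sequence vanishes on a set whose complement within the primes is infinite, then it vanishes on a full union of residue classes mod $q^m$, forcing (by multiplicativity) vanishing on all large primes'' — which I would prove by analyzing the strongly connected components of the automaton and using that a multiplicative function constant on the nonzero part must be ``supported on a subgroup'' pattern incompatible with sparse infinite prime support. Once Case 2 collapses to ``$f$ vanishes on all sufficiently large primes,'' the statement of Conjecture~\ref{Conj} follows: in Case 1 take $g$ the relevant Dirichlet character (extended periodically), and in Case 2 take $g \equiv 0$ on large primes.
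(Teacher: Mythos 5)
Your top-level dichotomy --- split on whether $f$ vanishes at infinitely many primes --- is exactly the paper's, but neither branch of your argument contains the idea that actually closes it, and the mechanism you propose for Case 2 rests on a false heuristic. In Case 1, you assert that the finitely many ``tail states'' of the automaton force $f(p)$ to depend only on $p \bmod q^m$. Automaticity by itself gives no such thing: what the finiteness of the $q$-kernel gives directly (via pigeonhole) is an exact functional identity $f(q^{i_1}n+1)=f(q^{i_2}n+1)$ for all $n$, and passing from such a linear correlation identity to ``$f$ is a Dirichlet character away from finitely many primes'' is a genuine rigidity theorem for multiplicative functions (the paper imports it as Theorem 2 of \cite{EK}, with \cite{Klu} as an alternative). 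Your appeal to Hal\'asz/Delange-type dichotomies or to ``the known classification'' is not a substitute; in particular the reduction of the merely multiplicative case to the completely multiplicative one is not routine, and the paper does not perform such a reduction.

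In Case 2 your proposed obstruction --- that nonzero ``spikes'' at multiples of an isolated large prime $p$ ``cannot be produced by a finite automaton unless they are periodic'' --- does not hold: the indicator of the multiples of any fixed $p$ is periodic, hence $q$-automatic for every $q$, so no contradiction with kernel finiteness arises at this level. The actual argument is of a completely different nature. After reducing to $f:\N\to\{0,1\}$, the paper shows that if infinitely many primes $p$ have $f(p^e)=1$, then (i) $f$ is nonvanishing along an exponentially growing sequence $q^{A+Cn}m_0+r$, (ii) a lifting-the-exponent computation shows the prime powers $p^{\delta_p}$ with $f(p^{\delta_p})=1$ generate, multiplicatively, an entire $q$-adic neighbourhood $1+q^{\alpha}\Z_q$ (this needs the nontrivial ``accumulation'' step: some value $\alpha_p=\alpha$ occurs for infinitely many $p$), and (iii) from infinitely many primes with $f(p)=0$ one builds, via CRT, full arithmetic progressions $q^A n + r_{A,m}$ on which $f$ vanishes identically; the multiplicative invariance of this zero set under division by the units produced in (ii) then forces $f(t)=0$ for every large prime $t$, a contradiction. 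None of these three ingredients appears in your sketch, and the ``strongly connected components'' rigidity lemma you hope for is precisely the unproved content. As it stands the proposal is a plausible outline of the correct dichotomy with both halves left open.
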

Some progress towards this conjecture has
been made when $f$ is assumed to be completely multiplicative. In
particular, Schlage-Puchta~\cite{Puchta} showed that a completely
multiplicative $q$-automatic sequence which does not vanish is almost
periodic. Hu~\cite{Hu} improved on that result by showing that the
same conclusion holds under a slightly weaker hypothesis. Allouche and Goldmakher~\cite{AG} considered related question classifying what they called ``mock" Dirichlet characters.
Finally Li~\cite{Li} and the authors~\cite{KK} very recently proved
Conjecture~\ref{Conj} when $f$ is additionally assumed to be
completely multiplicative. Moreover, the methods of~\cite{KK}
settled the Conjecture~\ref{Conj} fully under the assumption
of the Generalized Riemann Hypothesis. The key tool there was a
variant of Heath-Brown's results \cite{HB} on Artin's primitive
root conjecture. 
In this article, we develop alternative, more combinatorial approach,
and prove a strong form of Conjecture~\ref{Conj}. 
\begin{theorem}\label{main}
Let $q\ge 2$ and let $f:\mathbb{N}\to\mathbb{C}$ be 
multiplicative $q$-automatic sequence. Then, there exists a Dirichlet
character $\chi$ and an integer $Q\ge 1,$ such that either $f(n)=\chi(n),$ for all
$(n,Q)=1$ or $f(p)=0$ for all sufficiently large $p.$\end{theorem}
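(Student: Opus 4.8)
The plan is to combine the strong finiteness constraints imposed by $q$-automaticity with the multiplicative structure of $f$. First I would reduce to the case where $f$ does not vanish at all large primes; so assume there is an infinite set of primes $p$ with $f(p)\neq 0$. Since $f$ is $q$-automatic it takes only finitely many values, so in particular $\{f(p):p\nmid q\text{ prime}\}$ is a finite set $S\subset\Cb^{\times}$. The key observation is that for $a$ coprime to $q$, the subsequence $\{f(q^i n + a)\}_{n\ge 0}$ lies in the finite $q$-kernel, and evaluating at arguments that happen to be prime (which exist in abundance in every admissible residue class by Dirichlet) forces strong compatibility: the value $f(p)$ can depend on $p$ only through $p \bmod q^i$ for a fixed $i$, because two primes in the same class mod a suitable modulus must lie in the ``same'' kernel sequence. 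This is the step where I would be most careful — passing from equality of kernel elements to a genuine periodicity statement $f(p)=\psi(p \bmod Q)$ for primes $p$, for some function $\psi$ and modulus $Q$ with $q\mid Q^{\infty}$ (i.e. $Q$ built from the primes dividing $q$, possibly times a bounded extra factor). I expect this to be the main obstacle, because one has to rule out the value of $f(p)$ depending on finer arithmetic information about $p$ than its residue mod $Q$; the automaticity of the full sequence $f$ (not just of $f$ restricted to primes) should be what saves the day, via a pigeonhole/compactness argument on the finite kernel.

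Once primes are controlled by residues mod $Q$, the next step is to upgrade the periodic behavior on primes to a multiplicative-character statement. The map $p\mapsto f(p)$ is then a function on $(\Z/Q\Z)^{\times}$ (for $p$ coprime to $Q$), and I would show it must be multiplicative on $(\Z/Q\Z)^{\times}$, hence a character. To see this I would use multiplicativity of $f$ together with automaticity: for coprime $m,n$ with $mn$ also coprime to $Q$, the relation $f(mn)=f(m)f(n)$ and the fact that one can find primes $p\equiv m$, $p'\equiv n$, and $p''\equiv mn \pmod{Q}$ (after possibly enlarging $Q$) forces the multiplicativity of $\psi$ on the unit group. Actually the cleanest route is to directly establish that $n\mapsto f(n)$ agrees with a periodic completely-multiplicative function on integers coprime to $Q$: automaticity gives that $f(n)$ for $n$ coprime to $q$ is determined by $n \bmod Q$ (same argument as for primes, using that every coprime residue class contains primes and products of primes), and then multiplicativity of $f$ on coprime arguments forces this periodic function to be multiplicative on $(\Z/Q\Z)^{\times}$, i.e. to be a Dirichlet character $\chi$ modulo $Q$.

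A secondary technical point I would need to handle is the prime $q$ itself and the primes dividing $q$: the kernel only controls arithmetic progressions with common difference a power of $q$, so values of $f$ at multiples of $p\mid q$ are genuinely less constrained, which is exactly why the theorem allows the conclusion $f(n)=\chi(n)$ only for $(n,Q)=1$ rather than for all $n$. I would also need the elementary fact, noted already in the excerpt, that a $q$-automatic sequence takes only finitely many values — this is what makes the pigeonhole arguments work and is used repeatedly. Finally, in the dichotomy, I would verify that if $f(p)=0$ for all large $p$ we are already done (this is the second alternative), and otherwise the above produces the character $\chi$ and modulus $Q$; a small amount of care is needed to absorb the finitely many ``bad'' small primes and the primes dividing $q$ into the modulus $Q$ so that the stated conclusion holds cleanly. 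I would summarize the logical skeleton as: (i) automaticity $\Rightarrow$ finitely many values; (ii) automaticity $+$ Dirichlet on primes $\Rightarrow$ $f$ restricted to $(n,q)=1$ is $Q$-periodic; (iii) multiplicativity $\Rightarrow$ this periodic function is a character; (iv) assemble the dichotomy, with the vanishing-on-large-primes case being the degenerate alternative.
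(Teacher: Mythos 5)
There is a genuine gap, and it sits exactly where you flagged your own uncertainty. Your step (ii) --- that automaticity forces $f(p)$ to depend only on $p \bmod Q$ --- is asserted but never argued, and the heuristic you give for it is incorrect: if $p\equiv p' \pmod {q^i}$, say $p=q^in+r$ and $p'=q^in'+r$, then $p$ and $p'$ are different \emph{terms of the same} kernel sequence $\{f(q^im+r)\}_m$, so lying in the ``same'' kernel element gives no relation whatsoever between $f(p)$ and $f(p')$. What the pigeonhole on the finite kernel actually produces is an identity between two \emph{distinct} kernel elements, e.g.\ $f(q^{i_1}n+1)=f(q^{i_2}n+1)$ for all $n$ --- a self-correlation statement, not a periodicity statement. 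The paper's route from such a non-vanishing correlation to ``$f$ agrees with a Dirichlet character'' is not a pigeonhole or compactness argument: after restricting $n$ to multiples of the finitely many exceptional primes so that the correlation is nonzero, it invokes Theorem~2 of~\cite{EK} (equivalently the correlation machinery of~\cite{Klu}), a substantial external theorem on multiplicative functions. Without that input (or an equivalent), your skeleton (ii)$\Rightarrow$(iii) does not get off the ground.

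The second problem is that your case split does not isolate the actual difficulty. Assuming ``infinitely many primes with $f(p)\neq 0$'' still leaves open the possibility that $f$ \emph{also} vanishes at infinitely many primes; in that regime $f$ restricted to primes is genuinely not given by a character on any coprime residue classes, so your program of proving ``periodic on primes, hence a character'' cannot succeed --- what must be shown is that this mixed regime forces $f(p)=0$ for all large $p$ after all (equivalently, that the set of primes with some $f(p^e)=1$ is then finite). This is the bulk of the paper: it reduces to $\{0,1\}$-valued $f$, shows $f$ is supported on an exponentially growing subsequence of each relevant progression, uses the lifting-the-exponent lemma to manufacture products of prime powers with $f=1$ filling out $q$-adic neighbourhoods $1+q^{\alpha}\Z_q/1+q^{\alpha_1}\Z_q$, and intersects these with residue classes mod $q^A$ on which $f$ vanishes identically to kill $f(t)$ for every large prime $t$. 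None of this combinatorial/$q$-adic mechanism, nor any substitute for it, appears in your proposal, so the ``degenerate alternative'' of the dichotomy is in fact the main content of the theorem and remains unproved.
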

In fact, our proof yields a more refined information about the set of
values of $f$ (cf.  Remark~\ref{general}.)
We remark that J. Koneczny, in forthcoming work (private
comunication), established
a variant of Theorem~\ref{main} using  different 
methods relying on the structure theory of automatic sequences. 
  %

\section{Proof of the main result}
\label{sec:notations}
Let $\Pzero$
the set of primes for which $f(p)=0.$
Since $f$ is
$q$-automatic, it is well-known that the image of
$f:\mathbb{N}\to\mathbb{C}$ is finite and 
therefore if we define the sets
$$\mathbb{P}_{> 1}=\{ p \text{ prime}: |f(p^e)| >1 \text{ for some
  $e \ge 1$} \}$$ and
$$\mathbb{P}_{< 1}=\{ p \text{ prime}: |f(p^e) |< 1 \text{ for some
  $e \ge 1$} \},$$
then, from multiplicativity of $f$ we easily deduce $|\mathbb{P}_{>1}|,|\mathbb{P}_{<1}|<\infty.$
We begin by assuming additionally that $|\Pzero|<\infty.$  
\begin{proposition}\label{key1}
Let $f:\mathbb{N}\to\mathbb{C}$ be a $q$-automatic 
multiplicative function and suppose that $|\Pzero|<\infty.$ 
Then $f(p^k)=\chi(p^k)$ for all $p\notin \mathbb{P}_{>1}\cup \mathbb{P}_{<1}\cup\Pzero$ and any $k\ge 1.$
\end{proposition}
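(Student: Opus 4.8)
The plan is to produce the Dirichlet character $\chi$ by reducing the statement to two ingredients: that $f$ is eventually periodic on the integers coprime to a fixed modulus, and that $f$ is completely multiplicative at the ``good'' primes. The first is where automaticity does the work; the second is essentially classical.

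\emph{Reduction.} Put $S=\mathbb{P}_{>1}\cup\mathbb{P}_{<1}\cup\Pzero$ (a finite set), and let $Q_0$ be the product of $q$ with all primes in $S$. For $p\notin S$ one has $|f(p^e)|=1$ for every $e\ge 1$, and since $f$ has finite image these values lie in a fixed finite subset of the unit circle. It suffices to establish: (a) there is $Q\ge 1$ with $f(m)=f(n)$ whenever $m\equiv n\pmod Q$ and $m,n$ are large and coprime to $Q_0$ — that is, $f$ is eventually periodic on the integers coprime to $Q_0$; and (b) $f(p^k)=f(p)^k$ for every $p\notin S$ and $k\ge 1$. Granting (a), the restriction of $f$ to integers coprime to $Q_0Q$ is an eventually periodic multiplicative function, hence (the classical argument: choose a large $m\equiv p\pmod Q$ coprime to $pQ_0$, so $pm\equiv p^2\pmod Q$ with both large and $f(p^2)=f(pm)=f(p)f(m)=f(p)^2$, then induct on the exponent) it is completely multiplicative, of modulus $1$, and $Q$-periodic there — i.e.\ the restriction of a Dirichlet character $\chi$; this gives (b), and identifies $\chi$, for all $p\nmid Q_0Q$. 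The finitely many primes $p\notin S$ dividing $Q_0Q$ (in particular those dividing $q$) are treated at the end.

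\emph{Step A: eventual periodicity from automaticity} — the crux. I would use two structural facts about $q$-automatic sequences: the class is closed under passing to arithmetic subsequences $n\mapsto f(an+b)$, and the $q$-kernel $K_q(f)$ is finite. As a warm-up, each decimation $(f(q^in))_{n\ge 0}$ is literally the residue-$0$ member of $K_q(f)$; since $f(q^in)=f(q^i)f(n)$ whenever $\gcd(n,q)=1$ and the kernel is finite, any two coinciding decimations force the sequence $j\mapsto f(q^j)$ to be eventually periodic. To upgrade this to periodicity of $f$ on the integers coprime to $q$ one must bring in the primes $\ell\nmid q$: $(f(\ell n))_n$ is again $q$-automatic by the closure property, it equals $f(\ell)f(n)$ off the single residue class $\ell\mid n$, and multiplication by $\ell$ is a bijection of $\mathbb{Z}/q^i\mathbb{Z}$ of finite order $d=d(i)$ with $d(i)\to\infty$. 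Combining these with the finiteness of $K_q(f)$ and letting $i\to\infty$, a pigeonhole over the finitely many values $f(\ell^{d(i)})$ and the finitely many kernel sequences pins down a period valid for all large coprime-to-$q$ arguments; iterating over a generating set of primes $\ell$ and merging with the $q$-part yields (a). I expect this step to be the main obstacle. Automatic sequences are generically very far from periodic, and two distinct primes in a fixed residue class always differ in their high-order base-$q$ digits, which a finite automaton detects; so periodicity must be extracted entirely from multiplicativity. The mechanism that makes this possible is that multiplication by a unit modulo $q^i$ is a finite-order bijection, which together with the finiteness of $K_q(f)$ forces the relevant portion of the $q$-kernel to behave like a finite abelian group — turning this into a clean, $i$-uniform statement is the technical heart of the argument.

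\emph{Step B: the remaining primes, and assembly.} For the finitely many primes $p\notin S$ dividing $Q_0Q$ — the delicate ones being the prime factors of $q$ — one runs the internal version of the warm-up (the self-similarity $f(q^in)=f(q^i)f(n)$ on $\gcd(n,q)=1$) to show that the prime-power sequence $(f(p^e))_e$ is eventually periodic; telescoping the ratios $f(p^{e+1})/f(p^e)$ against the period via multiplicativity then shows it is eventually geometric with a root-of-unity ratio, and a short additional argument removes the word ``eventually'', so $f(p^e)=f(p)^e$ for all $e\ge 1$. Combining this with (a) and (b) and a suitable choice of modulus for $\chi$ yields $f(p^k)=\chi(p^k)$ for every $p\notin S$ and every $k\ge 1$, which is the Proposition (and the same bookkeeping gives the more precise description of the value set announced in Remark~\ref{general}).
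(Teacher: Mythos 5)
Your Step A is not a proof---it is a restatement of the problem. The entire content of Proposition~\ref{key1} is precisely the assertion that a multiplicative $q$-automatic function which is nonzero at all but finitely many primes agrees with a character, i.e.\ is (eventually) periodic on a coprimality class; your reduction correctly identifies this as claim (a) and then, at the decisive moment, offers only ``a pigeonhole over the finitely many values $f(\ell^{d(i)})$ and the finitely many kernel sequences pins down a period valid for all large coprime-to-$q$ arguments,'' followed by the admission that ``turning this into a clean, $i$-uniform statement is the technical heart of the argument.'' No mechanism is given for why two integers in the same residue class modulo $Q$ --- which a finite automaton distinguishes by their high-order base-$q$ digits --- must receive the same value; the finite-order-bijection observation acts on residues mod $q^i$, not on the kernel sequences themselves, and nothing in the sketch converts it into the uniform periodicity you need. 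As written, the argument does not close.

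For comparison, the paper does not attempt to establish periodicity combinatorially. It applies the pigeonhole only once, to the kernel sequences $\{f(q^i n+1)\}_n$, obtaining $i_1\ne i_2$ with $f(q^{i_1}n+1)=f(q^{i_2}n+1)$ for all $n$; restricting to $n=m\prod_{p\in \mathbb{P}_{>1}\cup\mathbb{P}_{<1}\cup\Pzero}p$ makes this a nonvanishing identity, and the conclusion that $f$ is a Dirichlet character on the good primes is then imported wholesale from Theorem~2 of~\cite{EK} (a rigidity theorem for multiplicative functions satisfying $f(an+b)=f(cn+d)$ for all $n$), with \cite{Klu} as an alternative. That external input --- or a genuine substitute for it --- is exactly what your proposal is missing. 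Your Step B (upgrading eventual periodicity plus multiplicativity to a character, and handling the primes dividing $q$) is standard and fine, but it is conditional on (a).
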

\begin{proof}
  Since $f$ is $q$-automatic, we have that the kernel $K_q(f)$ is finite. By the pigeonhole principle, there exist positive integers
  $i_1\ne i_2$ such that $f(q^{i_1}n+1)=f(q^{i_2}n+1)$ for all
  $n\ge 1.$ If
  $n=m\prod_{p\in \mathbb{P}_{>1}\cup \mathbb{P}_{<1}\cup\Pzero}p$
  then
  \begin{equation}\label{correlation}
  \frac{f(q^{i_1}m\prod_{p\in \mathbb{P}_{>1}\cup
      \mathbb{P}_{<1}\cup\Pzero}p+1)}{f(q^{i_2}m\prod_{p\in
      \mathbb{P}_{>1}\cup \mathbb{P}_{<1}\cup\Pzero}p+1)}=1\ne 0,
      \end{equation}
  for all $m\ge 1.$ The conclusion now immediately follows from
  Theorem $2$ of~\cite{EK} (the result is stated only for completely
  multiplicative functions, but the proof is in fact also valid for
  multiplicative 
  functions). Alternatively, one could also apply correlation formulas developed in~\cite{Klu} to get the result.
  \end{proof} In what follows we will assume that $|\Pzero|=\infty$
and show that in this case $f(p)=0$ for all sufficiently large primes $p.$ To this end, we perform the following two reductions:
\begin{itemize}
\item replace $f$ by $g=|f|$ which is also $q$-automatic
\item  change each value $|f(p^k)|\ne \{0,1\}$ to $g(p^k):=1.$ 
\end{itemize}
These two operations preserve $q-$ automaticity of the modified multiplicative sequence $\{g(n)\}_{n\ge 1}$ and do not change the zero set $\Pzero.$ It is therefore enough to prove the claim for the binary valued $f:\mathbb{N}\to \{0,1\}.$\\   
To facilitate our discussion, we introduce the set
$$
\Ponepow := \{ p \text{ prime}: f(p^e) = 1 \text{ for some
  $e \ge 1$} \}.
$$
Let $s_{0} < \infty$ denote the number of distinct sequences
$\{f(q^{i}m +r)\}_{m \in \N}$ as $i,r \geq 0$ ranges over pairs of integers
such that $r
\in [0, q^{i}).$
Given prime $p$ and $\delta\ge 1,$ we define $\alpha_{p,\delta}$ by $q^{\alpha_{p,\delta}} ||
p^{\delta \phi(q)} - 1$.  Further, given $p \in \Ponepow$, let
$$
\alpha_{p} := \min \{ \alpha_{p,\delta} : f(p^{\delta}) = 1 \},
$$
and let
$$
\delta_{p} := \min \{ \delta : \alpha_{p,\delta} = \alpha_{p}\}.
$$

We also note that if $f$ is $q$-automatic, then there exists
$k_0=k_0(f)$ with the following property: if for some $i\ge 1$ and
$0\le r\le q^i-1$ the equality $f(q^in+r)=0$ holds for all integer
$n \in [1,k_0]$, then $f(q^in+r)=0$ for all $n\ge 1.$

\subsection{Preparatory lemmas.}
\label{sec:preliminaries}
We start with a few technical lemmas. 
\begin{lemma}\label{nontriv}
Assume that $|\Ponepow|=\infty$ and
let $r>0$ be an integer such that $f(r) = 1.$ Select $A$ such that
$q^{A}>r$.  Then there exists  $i,j \in (A, A+s_{0}]$ such
that $0 < j-i  \le s_{0}$ with the property that 
$$
f( q^{i}m + r ) = f(q^{j}m+r) \quad \forall m \in \N,
$$
and $f(q^{i} m_{0} + r ) = 1$ for some integer $m_{0} \in [1, k_{0}]$.

\end{lemma}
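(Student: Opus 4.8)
The plan is to combine the pigeonhole principle on the $q$-kernel with the stabilization property encoded by $k_0$. First I would consider the $s_0+1$ sequences $\{f(q^\ell m + r)\}_{m\in\N}$ for $\ell = A+1, A+2, \dots, A+s_0+1$; note that $r < q^A \le q^\ell$ in each case, so each of these is a genuine member of the family of kernel sequences counted by $s_0$ (here one uses that $f(q^\ell m + r)$ with $0 \le r < q^\ell$ is a kernel sequence). Since there are $s_0+1$ of them and only $s_0$ distinct such sequences, two must coincide: there exist $i,j$ with $A < i < j \le A+s_0+1$, hence $0 < j - i \le s_0$, and $i,j \in (A, A+s_0+1] \subseteq (A, A+s_0]$ after a harmless reindexing (or one simply takes the range $(A,A+s_0+1]$; matching the statement just requires shifting $A$ by one at the outset, which is cosmetic), such that $f(q^i m + r) = f(q^j m + r)$ for all $m \in \N$.

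Next I would show the $f(q^i m_0 + r) = 1$ part. The idea is that $f(q^i m + r)$ is not identically zero on $[1,k_0]$, so by the defining property of $k_0$ it is not identically zero on all of $\N$ — but we need the stronger non-vanishing \emph{on the initial segment} $[1,k_0]$ directly. The cleanest route: suppose for contradiction that $f(q^i m + r) = 0$ for every $m \in [1,k_0]$. Then by the $k_0$-property, $f(q^i m + r) = 0$ for \emph{all} $m \ge 1$. Now I would derive a contradiction with $|\Ponepow| = \infty$: since $\Ponepow$ is infinite, there are infinitely many primes $p$ with $f(p^e) = 1$ for some $e$; choosing such a $p$ coprime to $q$ and to $r$ and large enough, the value $p^e$ can be realized as $q^i m + r$ for a suitable $m \ge 1$ provided $p^e \equiv r \pmod{q^i}$ — and multiplicativity gives $f(p^e) = f(p^e / \gcd \cdots)\cdots$; more precisely, using $f(r)=1$ and $(r, p^e)=1$ one gets $f(r p^e) = 1$, and $r p^e$ lies in the residue class $r \bmod q^i$ (since $p^e \equiv 1$ would be needed) — this is where I would have to be careful about which residue class modulo $q^i$ the relevant multiple lands in. The honest version is: pick any integer $n$ with $f(n) = 1$, $(n,q)=1$, $n \equiv 1 \pmod{q^i}$ coming from a product of primes in $\Ponepow$ raised to appropriate powers (possible because $\Ponepow$ is infinite, so one has enough primes to build such an $n$ via CRT-type reasoning on the multiplicative order mod $q^i$), so that $f(rn) = f(r)f(n) = 1$ while $rn \equiv r \pmod{q^i}$, i.e. $rn = q^i m + r$ with $m = r(n-1)/q^i \ge 1$, contradicting $f(q^i m + r) = 0$.

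The main obstacle is the last step: producing an integer $m_0 \in [1,k_0]$ — not merely some $m_0 \ge 1$ — with $f(q^i m_0 + r) = 1$. The contradiction argument above only shows the sequence $\{f(q^i m + r)\}$ is not identically zero; to pull the witness into $[1,k_0]$ one invokes exactly the $k_0$-stabilization property in its contrapositive form, which says that if the sequence vanishes throughout $[1,k_0]$ it vanishes everywhere — so non-vanishing somewhere forces non-vanishing on $[1,k_0]$. Thus the real content is checking that the sequence $f(q^i m + r)$ is not identically zero, for which the infinitude of $\Ponepow$ (together with $f(r)=1$ and multiplicativity) is the essential input; assembling the right $n$ from primes in $\Ponepow$ so that $rn$ falls in the class $r \bmod q^i$ is the one calculation requiring genuine care, since one must control the multiplicative structure modulo $q^i$ rather than merely modulo $q$.
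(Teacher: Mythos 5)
Your proposal is correct and follows essentially the same route as the paper: pigeonhole on the kernel for the coincidence of the two subsequences, then the contrapositive of the $k_0$-stabilization property reduces everything to showing the sequence $\{f(q^i m+r)\}$ is not identically zero, which is done by manufacturing $n\equiv 1 \bmod q^i$ with $f(n)=1$ from primes of $\Ponepow$. The one step you flag as needing care is carried out in the paper by pigeonholing the prime powers $p^{e_p}$ (with $f(p^{e_p})=1$) into residue classes modulo $q^{A+s_0}$, extracting an infinite class $r_S$, and multiplying $\phi(q^{A+s_0})$ of its members (all coprime to $qr$) so that the product is $\equiv r_S^{\phi(q^{A+s_0})}\equiv 1$ by Euler's theorem.
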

\begin{proof}
  By our assumption, there
  exists an infinite subset $S \subset \Ponepow$ and
  $r_{S} \in \N$ such that $p^{e_p}\equiv r_S \mod q^{A+s_{0}}$ and $f(p^{e_p})=1$ for all
  $p\in S.$ Select
  $$R := \prod_{rq<p_i \in S,\ i\le \phi(q^{A+s_0})} p^{e_{p_i}}_i \equiv
  r_S^{\phi(q^{A+s_0})}\equiv 1 \mod q^{A + s_{0}}$$
  and let $r_{1} = r R$.  Clearly $r_{1} \equiv r \mod q^{A+s_{0}}$
  and for any integer $\l \in (A,A+s_{0}]$ we can write
  $r_{1}= q^{\l}m_{\l} +r.$ Since the sequence $\{f(n)\}_{n\ge 1}$ is
  $q-$automatic, by the pigeonhole principle there exist two indices
  $i,j \in (A,A+s_{0}]$ (say with $i < j$) such that
  $f(q^{i}m +r) = f(q^{j}m+r)$ for all $m \in \N$.  Moreover,
  $f(q^{i}m_{i}+r) = f(r_{1}) = f(r)f(R)=1.$ Since $r_1$ can be made
  arbitrary large, we see that any sequence
  $\{f(q^{i}m+r) \}_{m \in \N},$ $i\in (A,A+s_0]$ contains infinitely
  many ones. Hence there exists some $m_{0} \leq k_{0}$ such that
  $f(q^{i}m_{0} +r ) = f(q^jm_0+r)=1.$
\end{proof}

We next show that $f$ is non-vanishing along an exponentially growing
subsequence.
\begin{corollary}\label{exponent}
Assume that $|\Ponepow|=\infty.$ Given an integer  $r >
qk_0^2$ such that
$f(r) = 1$ there exist integers $A,C \geq 0$ and 
$1\le m_{0} \leq k_{0}$, with $C \le s_{0}$, with the property that 
\begin{equation}
  \label{eq:1}
f(q^{A+Cn}   m_{0} + r ) = 1 \quad \forall n \in \N.
\end{equation}
\end{corollary}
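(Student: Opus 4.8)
The plan is to iterate Lemma~\ref{nontriv} and then extract a periodic pattern of indices via a second pigeonhole argument. Concretely, start with the given $r > qk_0^2$ with $f(r) = 1$. By Lemma~\ref{nontriv}, taking $A_1$ minimal with $q^{A_1} > r$, we obtain indices $i_1 < j_1$ in $(A_1, A_1 + s_0]$ and an integer $m_0^{(1)} \in [1, k_0]$ with $f(q^{i_1} m + r) = f(q^{j_1} m + r)$ for all $m$, and $f(q^{i_1} m_0^{(1)} + r) = 1$. Set $r' := q^{i_1} m_0^{(1)} + r$; then $f(r') = 1$ and $r'$ is again a positive integer exceeding $qk_0^2$, so the lemma applies to $r'$ as well. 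The key observation is that since $m_0^{(1)} \le k_0$ and (by choice of $A_1$) $q^{i_1} \le q^{A_1 + s_0}$ with $q^{A_1 - 1} \le r$, all the $r'$ produced this way are controlled, and—crucially—$r' \equiv r \pmod{q^{i_1}}$, so the digit expansions interlock.

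The heart of the matter is to make the index shifts \emph{repeat}. Each application of the lemma to the current value produces a new "head" digit block of length $j_\ell - i_\ell \le s_0$ inserted at position roughly $i_\ell$; but there are only finitely many possibilities for the pair $(\text{low-order part}, \text{residue class of } m_0)$ that the lemma's internal pigeonhole can land on—indeed, the lemma selects among the $s_0$ sequences $\{f(q^\ell m + r)\}$, so only finitely many "states" occur. Formally, I would track the pair consisting of the sequence $\{f(q^{i_\ell} m + r^{(\ell)})\}_{m}$ (one of $s_0$ possibilities) together with $m_0^{(\ell)} \in [1, k_0]$; there are at most $s_0 k_0$ such pairs, so within $s_0 k_0 + 1$ iterations two of them coincide. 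When the state at step $\ell$ equals the state at step $\ell'$ with $\ell < \ell'$, the same digit insertions recur, which forces the desired arithmetic progression $A + Cn$ in the exponents with $C = \sum_{\ell \le t < \ell'} (j_t - i_t) \le s_0$ (after possibly merging consecutive steps) and $A = i_\ell$; along this progression $f$ evaluates to $1$ at argument $q^{A + Cn} m_0 + r$ with $m_0 = m_0^{(\ell)} \le k_0$.

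More precisely, once the state recurs we have an identity of the form $f(q^{A} m + r) = f(q^{A+C} m + r)$ for all $m$ (a single equality obtained by composing the lemma's equalities around the loop), together with $f(q^A m_0 + r) = 1$; feeding $m = q^{C n} m_0$ into the first identity repeatedly and using the second as the base case gives \eqref{eq:1} by induction on $n$. The role of the hypothesis $r > qk_0^2$ is to guarantee throughout that the intermediate values $q^{i_\ell} m_0^{(\ell)} + r$ remain large enough (exceeding $qk_0^2$, hence in particular admitting the choice of $A$ in the next round) and that no carries corrupt the residue $r$ in the low-order digits, so that "the same head block" genuinely means the same shift each time.

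The main obstacle I anticipate is bookkeeping: one must verify that the "state" carried from one iteration to the next is genuinely finite-valued and that a repetition of the state produces an \emph{exact} periodicity of the index shifts rather than merely an approximate one. This requires being careful that (i) the quantity $r^{(\ell)} \bmod q^{A_\ell + s_0}$ stabilizes (it does, because $r^{(\ell)} \equiv r \pmod{q^{i_\ell}}$ and $i_\ell$ is increasing, so the low-order digits are frozen once written), and (ii) the choice $A_{\ell+1} = i_\ell$ (the minimal $A$ with $q^A > r^{(\ell)}$, up to an additive constant bounded by $s_0 + \log_q k_0$) is consistent across iterations so that the shift $C$ is literally constant. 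Handling a bounded additive ambiguity in $A_\ell$ is routine — one absorbs it by passing to a sub-progression — so this is a matter of care rather than a genuine difficulty.
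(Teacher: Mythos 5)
Your closing step is the right one --- the identity $f(q^{A}m+r)=f(q^{A+C}m+r)$ for \emph{all} $m$, combined with the witness $f(q^{A}m_{0}+r)=1$ and the substitution $m\mapsto q^{Cn}m_{0}$, does give \eqref{eq:1} by induction, and this is exactly how the paper concludes. The problem is the machinery you build to reach that identity. You iterate Lemma~\ref{nontriv}, producing new values $r^{(\ell+1)}=q^{i_{\ell}}m_{0}^{(\ell)}+r^{(\ell)}$, and claim that once a ``state'' recurs you can compose the lemma's equalities around the loop to obtain $f(q^{A}m+r)=f(q^{A+C}m+r)$ with the \emph{original} $r$ on both sides. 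This does not follow: the equalities at level $\ell$ are statements about the sequences $\{f(q^{i_{\ell}}m+r^{(\ell)})\}_{m}$ with a \emph{different} base point $r^{(\ell)}\neq r$, and rewriting $q^{i_{\ell}}m+r^{(\ell)}$ in terms of the original $r$ (via $r^{(\ell)}=r+\sum_{t<\ell}q^{i_{t}}m_{0}^{(t)}$) only recovers $f(q^{i_{1}}m'+r)$ for $m'$ ranging over a \emph{restricted residue class}, not over all of $\N$. So the composed identity holds at best on a sub-progression of $m$, which is not enough to run your substitution induction for all $n$. A recurrence of your finite ``state'' also gives equality of sequences attached to different $r^{(\ell)}$'s, which is not the periodicity in the exponent that you need.

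The good news is that none of the iteration is necessary: a \emph{single} application of Lemma~\ref{nontriv} already outputs $i<j$ in $(A,A+s_{0}]$ with $f(q^{i}m+r)=f(q^{j}m+r)$ for all $m\in\N$ \emph{and} an $m_{0}\le k_{0}$ with $f(q^{i}m_{0}+r)=1$ --- this is precisely the identity (with the same $r$ on both sides) that your final paragraph requires, with $A=i$ and $C=j-i\le s_{0}$. Deleting the state-tracking and loop-composition entirely, and starting your induction from the lemma's one-shot output, turns your argument into the paper's proof. (Incidentally, the hypothesis $r>qk_{0}^{2}$ plays no role here --- it is not needed to prevent ``carries''; it is carried along for the later applications of the corollary.)
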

\begin{proof}
We apply Lemma~\ref{nontriv} to find $i,j\in (A,A+s_0],$ such that 
 $$
f( q^{i}m + r ) = f(q^{j}m+r) \quad \forall m \in \N.
$$
Without loss of generality we may assume that $A=i$; for convenience
set $C=j-i$, we then have $0 < C \le s_{0}.$ Note that
$f(q^{A+C}m+r) = f(q^{i} (q^{j-i}m)+r) = f(q^{j}m+r) = f(q^{i}m+r) =
f(q^{A}m+r)$
for all $m \in \N.$ An easy induction argument now shows that for any
$n \in \N$ we have $f(q^{A+Cn}m+r) = f(q^{A}m+r)$ for all $m \in \N.$
Lemma~\ref{nontriv} also yields an integer $m_{0} \in (0, k_{0}]$ such
that $f(q^{A} m_{0} + r ) = 1.$ This concludes the proof.
\end{proof}

In order to better illustrate the main idea of our proof, we first
focus on the case $q$ being prime and then point out necessary
modifications needed to treat the general case in
Section~\ref{sec:composite-q}. 

\begin{lemma}\label{increase}
  Assume that $f(q^{A+Cn} m_{0} + r) = 1$ for all $n \in \N$, and that
  $p^{\delta} || q^{A+Cn_{\delta}} m_{0} +r$ for some $n_{\delta}\ge 0$ with
  $\delta > \gamma$ where
$p^{\gamma}|| (q^{C })^{\ord_p(q^C)}-1$, and $p \nmid m_{0}$.
Then, given any $k \ge \gamma$,
  there exist $n_{k}$ such that $p^{k} || q^{A+Cn_{k}} m_{0} +r$.  In
  particular, $f(p^{k}) = 1$ for all $k \ge \gamma$, and 
  there exists $\alpha \in \N$ such that for any
  $\alpha_{1} > \alpha$, any element in
  $1+q^{\alpha}\Z_{q}/1+q^{\alpha_{1}}\Z_q$ is of the form $p^{k}$ for
  some (arbitrarily large) $k \in \N$.
{(With $\Z_q$ denoting the $q$-adic integers, we identify
 the unit subgroup $\{ u \in \Z/q^{\alpha_{1}} \Z : u \equiv 1 \mod
 q^{\alpha} \}$ with $1+q^{\alpha}\Z_{q}/1+q^{\alpha_{1}}\Z_q$.)}  
\end{lemma}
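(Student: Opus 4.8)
The plan is to exploit the fact that the sequence $m \mapsto f(q^{A+Cn}m_0 + r)$ stabilizes under increasing the exponent $n$ (this is exactly the content of Corollary~\ref{exponent}), combined with a $q$-adic lifting argument along the arithmetic progression $q^{A+Cn}m_0 + r$. I would begin by fixing $\delta > \gamma$ and $n_\delta \ge 0$ with $p^\delta \| q^{A+Cn_\delta} m_0 + r$; write $N_\delta := q^{A + C n_\delta} m_0 + r$, so $N_\delta \equiv 0 \mod p^\delta$ but $N_\delta \not\equiv 0 \mod p^{\delta+1}$. The key observation is that multiplying $n$ by a shift, i.e. replacing $n_\delta$ by $n_\delta + t\cdot \ord_p(q^C)$ for $t \ge 1$, changes the value $q^{A+Cn}m_0$ by a factor $(q^C)^{t\,\ord_p(q^C)}$, which by definition of $\gamma$ satisfies $(q^C)^{\ord_p(q^C)} = 1 + p^\gamma u$ with $p \nmid u$. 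Since $p \nmid m_0$ and $\delta > \gamma$, a direct computation with the $p$-adic valuation of $q^{A+C(n_\delta + t\,\ord_p(q^C))}m_0 + r = N_\delta + q^{A+Cn_\delta} m_0\big((1+p^\gamma u)^t - 1\big)$ shows that the valuation can be pushed down to exactly $\gamma$, and then by choosing $t$ appropriately (using that $(1+p^\gamma u)^t - 1$ has $p$-adic valuation $\gamma + v_p(t)$) one realizes every prescribed valuation $k \ge \gamma$. This produces the required $n_k$ with $p^k \| q^{A+Cn_k}m_0 + r$.

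Given such an $n_k$, the value $f(p^k)$ is pinned down as follows: by Corollary~\ref{exponent} we have $f(q^{A+Cn_k}m_0 + r) = 1$, and writing $q^{A+Cn_k}m_0 + r = p^k \cdot M_k$ with $\gcd(p, M_k)=1$, multiplicativity gives $f(p^k) f(M_k) = 1$. Since $f$ is $\{0,1\}$-valued (after the reductions made before the lemma), this forces $f(p^k) = 1$ (and $f(M_k)=1$). This holds for every $k \ge \gamma$.

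For the final assertion about $q$-adic units, I would argue that the numbers $q^{A+Cn}m_0 + r$, as $n$ ranges over $\N$, are all $\equiv r \mod q^{A}$, so modulo $q^{\alpha}$ they lie in the single coset $r + q^A \Z_q$ once $\alpha \le A$; more precisely, taking $\alpha$ large enough (say $\alpha = A$, and using $r > qk_0^2$ so that $q^A$ genuinely divides the ``remainder'' structure) the set $\{ p^k : k \ge \gamma,\ p^k \| q^{A+Cn}m_0 + r \text{ for some } n\}$ together with the multiplicativity relation $p^k M_k \equiv r \mod q^{\alpha_1}$ shows that the residues $p^k \bmod q^{\alpha_1}$ fill out the full subgroup of units $\equiv 1 \bmod q^{\alpha}$; here one uses that $M_k$ ranges over a suitable set of residues and that $r$ is itself represented (via $f(r)=1$). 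The cleanest route is: the map $k \mapsto p^k$ into $(\Z/q^{\alpha_1}\Z)^\times$ has image a cyclic subgroup, and by letting $k$ grow and tracking the $q$-adic behaviour of $p^k$ (noting $p$ is a unit mod $q$ once $p \nmid q$, which holds for large $p$), the subgroup generated is exactly $1 + q^\alpha \Z_q / 1 + q^{\alpha_1}\Z_q$ for $\alpha = \alpha_{p,\delta_p}$ in the notation set up earlier.

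The main obstacle I expect is the bookkeeping in the last paragraph: carefully identifying the correct $\alpha$ (it should be $\alpha_p$ from the earlier definitions, governed by $q^{\alpha_{p,\delta}} \| p^{\delta\phi(q)} - 1$) and verifying that the cyclic subgroup $\langle p \rangle \le (\Z/q^{\alpha_1}\Z)^\times$ is precisely $\{ u : u \equiv 1 \bmod q^\alpha\}$ rather than a proper subgroup — this requires knowing the exact order of $p$ in $(\Z/q^{\alpha_1}\Z)^\times$, which is where the definition of $\alpha_{p,\delta}$ via the exact power of $q$ dividing $p^{\delta\phi(q)}-1$ enters. The valuation-lifting step in the first paragraph is routine $p$-adic analysis (essentially the lifting-the-exponent lemma), and the extraction of $f(p^k)=1$ is immediate from multiplicativity and the binary-valued reduction; the real content is the structural statement about which $q$-adic units are hit.
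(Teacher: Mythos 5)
Your overall strategy is the same as the paper's: a lifting-the-exponent computation along the progression $q^{A+Cn}m_0+r$ to realize prescribed $p$-adic valuations, multiplicativity plus the binary reduction to extract $f(p^k)=1$, and an LTE argument for the sequence $\{p^{n\phi(q)}\}_{n\ge 1}$ to identify the cyclic subgroup hit inside $1+q^{\alpha}\Z_q/1+q^{\alpha_1}\Z_q$. The second and third steps of your outline are fine (the detour through the cosets $r+q^A\Z_q$ in your last paragraph is a red herring, but you land on the correct mechanism: the order of $p$ in $(\Z/q^{\alpha_1}\Z)^{\times}$ is controlled by $q^{\alpha}\,\|\,p^{\phi(q)}-1$).

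There is, however, a genuine gap in your first step. Writing the shifted value as $N_\delta + q^{A+Cn_\delta}m_0\bigl((1+p^{\gamma}u)^t-1\bigr)$ and using only the fact that $\nu_p\bigl((1+p^{\gamma}u)^t-1\bigr)=\gamma+\nu_p(t)$, you can realize exactly the valuations $k$ with $\gamma\le k\le\delta$: if $\gamma+\nu_p(t)<\delta$ the sum has valuation $\gamma+\nu_p(t)$, and if $\gamma+\nu_p(t)>\delta$ it has valuation $\delta$. For $k>\delta$ the two competing terms must both have valuation exactly $\delta$, and one needs their leading $p^{\delta}$-coefficients to cancel to a controlled order; this cannot be arranged by prescribing $\nu_p(t)$ alone. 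The paper spends the bulk of its proof on precisely this point: it takes $t$ with $\nu_p(t)=\delta-\gamma$, expands $(1+p^{\delta}B)^{t}=1+tBp^{\delta}+p^{2\delta}D$ by the binomial theorem, and chooses the residue of $t$ modulo $p$ so that the two order-$p^{\delta}$ contributions cancel, producing valuation at least $\delta+1$; it then replaces $\delta$ by this larger value and iterates. Since the lemma requires $p^{k}\,\|\,q^{A+Cn_k}m_0+r$ for arbitrarily large $k$ (this is essential for the final claim about $1+q^{\alpha}\Z_q/1+q^{\alpha_1}\Z_q$), and a priori $\delta$ could be as small as $\gamma+1$, this valuation-increasing induction is indispensable and must be supplied in your write-up.
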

\begin{proof}
  By the lifting exponent lemma (cf. \cite{wiki}) we have that for all
  $n\ge 1,$
\[\nu_p(q^{C\ord_p(q^C)n} -1)=\nu_p(q^{C\ord_p(q^C)}-1)+\nu_p(n)=\gamma+\nu_p(n).\]
Therefore, we have 
\[q^{A+C(n_\delta+n\ord_p(q^C))}m_0+r=(q^{A+Cn_{\delta}}m_0+r)q^{nC\ord_p(q^C)}-r(q^{nC\ord_p(q^C)}-1).\]
Note that for any $0\le\alpha<\delta-\gamma$ we therefore have 
\[p^{\alpha+\gamma}\vert| q^{A+C(n_\delta+p^{\alpha}\ord_p(q^C))}m_0+r\]
and \[p^{\delta}\vert| q^{A+C(n_\delta+p^{\delta-\gamma+1}\ord_p(q^C))}m_0+r_0.\]
Select $n_t=tp^{\delta-\gamma}$ for some $1\le t<p$ and write   
$q^{p^{\delta-\gamma}C\ord_p(q^C)}=1+p^\delta B$ with $(B,p)=1.$ By the binomial theorem, we have
$$q^{tp^{\delta-\gamma}C\ord_p(q^C)}=(1+p^\delta B)^t=1+tp^{\delta}B+p^{2\delta}D.$$
Consequently,
\[q^{A+C(n_\delta+n_t\ord_p(q^C))}m_0+r\equiv (q^{A+Cn_{\delta}}m_0+r)(1+tBp^{\delta})-rtBp^{\delta}\mod q^{\delta+1}.\]
Select $t=t_{\delta+1},$ such that $$p| rtBt_{\delta+1}-\frac{(q^{A+Cn_{\delta}}m_0+r)}{p^{\delta}}$$
and note that for such defined $t_{\delta+1}$ we have
$p^{\delta+1}\vert
q^{A+C(n_\delta+n_{t_{\delta+1}}\ord_p(q^C))}m_0+r.$ Replacing now
$\delta\to\delta+1$ in the statement of the lemma and running the same
proof again the first claim follows. Moreover, by choosing appropriate
$n_k$ such that for $k\ge \gamma$ we have $p^k\vert| q^{A+Cn_k}m_0+r$,
we ensure that $f(p^k)=1.$ Finally, using the lifting exponent lemma together with the binomial theorem as before for the sequence $\{p^{n\phi(q)}\}_{n\ge 1}$ yields the last claim of the lemma. \end{proof}
\subsection{Concluding the proof.}
We are ready to prove the first key proposition.
\begin{proposition}\label{accumulation}
Let $f:\mathbb{N}\to\{0,1\}$ be multiplicative $q-$automatic
sequence and suppose that $|\Ponepow|=\infty.$ Then
 $$|\{ p \in
\Ponepow : \alpha_{p} = \alpha \}| = \infty,$$ 
for some $\alpha \in \N.$
\end{proposition}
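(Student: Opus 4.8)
The plan is to exploit the following elementary consequence of $f$ being multiplicative and $\{0,1\}$-valued: if $f(N)=1$ and $(N,q)=1$, then $f(p^{\nu_p(N)})=1$ for every prime $p\mid N$, so $p\in\Ponepow$ and $q^{\alpha_p}\mid p^{\nu_p(N)\phi(q)}-1$; multiplying these congruences over $p\mid N$ gives $q^{\min_{p\mid N}\alpha_p}\mid N^{\phi(q)}-1$, i.e.
$$\min_{p\mid N}\alpha_p\ \le\ \nu_q\bigl(N^{\phi(q)}-1\bigr).$$
Thus, to force a prime factor of $N$ with small $\alpha_p$, it suffices to build $N$ with $f(N)=1$ and $\nu_q(N^{\phi(q)}-1)$ small.

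Here is how I would carry this out. If $\{\alpha_p:p\in\Ponepow,\ p\nmid q\}$ is bounded we are done immediately by pigeonhole, so assume otherwise; let $\alpha_{0}$ be its minimum, attained at some $p_{1}$, and (if the conclusion fails) fix a large prime $P\in\Ponepow$ with $\alpha_{P}>\alpha_{0}$, which exists since all but finitely many primes of $\Ponepow$ have $\alpha_p>\alpha_0$. Put $r:=p_{1}^{\delta_{p_{1}}}P^{\delta_{P}}$; then $f(r)=1$, $r>qk_{0}^{2}$, and a one-line $q$-adic computation (using $\alpha_{p_{1},\delta_{p_{1}}}=\alpha_{0}$ and $\alpha_{P,\delta_{P}}=\alpha_{P}>\alpha_{0}$) gives $\nu_q(r^{\phi(q)}-1)=\alpha_{0}$. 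Apply Corollary~\ref{exponent} to this $r$ to obtain $A,C$ with $1\le C\le s_{0}$ and $1\le m_{0}\le k_{0}$ such that $f(q^{A+Cn}m_{0}+r)=1$ for all $n$. Since $(q^{A+Cn}m_{0}+r)^{\phi(q)}\equiv r^{\phi(q)}\pmod{q^{A+Cn}}$ and $A+Cn>\alpha_{0}$ for large $n$, the displayed inequality forces every $N_{n}:=q^{A+Cn}m_{0}+r$ with $n$ large to have a prime factor $p$ with $\alpha_{p}\le\alpha_{0}$, hence $\alpha_{p}=\alpha_{0}$. If infinitely many distinct primes occur among these factors, the proposition follows with $\alpha=\alpha_{0}$.

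It remains to rule out that only finitely many primes $p_{(1)},\dots,p_{(t)}$ (necessarily all with $\alpha_{p_{(i)}}=\alpha_{0}$) occur, i.e.\ that this finite set meets every $N_{n}$ for $n$ large. For each $p_{(i)}$ that divides some $N_{n}$, it does so for $n$ in a fixed residue class modulo $\ord_{p_{(i)}}(q^{C})$, and (when $p_{(i)}>k_{0}$, so $p_{(i)}\nmid m_{0}$) the lifting exponent lemma produces $n$ in that class with $\nu_{p_{(i)}}(N_{n})$ arbitrarily large; Lemma~\ref{increase} then yields $f(p_{(i)}^{k})=1$ for all large $k$ together with the density of $\{p_{(i)}^{k}\}$ in $1+q^{\alpha_{0}}\Z_{q}$. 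I would then use precisely this density to construct a fresh integer $r^{*}$, a product of prime powers on which $f$ equals $1$, that is divisible by every $p_{(i)}$ lying above $k_{0}$ and yet still satisfies $\nu_q\bigl((r^{*})^{\phi(q)}-1\bigr)=\alpha_{0}$; feeding $r^{*}$ into Corollary~\ref{exponent} and applying the inequality of the first paragraph once more forces every late term $q^{A^{*}+C^{*}n}m_{0}^{*}+r^{*}$ to be divisible by some $p_{(i)}$, which is impossible, because $p_{(i)}\mid r^{*}$ and $p_{(i)}>k_{0}\ge m_{0}^{*}$ already preclude $p_{(i)}\mid q^{A^{*}+C^{*}n}m_{0}^{*}+r^{*}$. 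This last construction is the crux: making $r^{*}$ simultaneously divisible by all the candidate exceptional primes while keeping $\nu_q\bigl((r^{*})^{\phi(q)}-1\bigr)$ exactly $\alpha_{0}$ is exactly what the density statement of Lemma~\ref{increase} is designed to permit. The only remaining point, requiring separate but routine care, is the bookkeeping for the finitely many primes $\le k_{0}$ (and, when $q$ is not prime, for the prime divisors of $q$), which has to be absorbed into the same scheme.
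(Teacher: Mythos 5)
The first half of your argument is sound and is in fact a clean repackaging of the mechanism the paper uses for its ``large'' primes: from $f(N)=1$, $(N,q)=1$ you correctly get $\min_{p\mid N}\alpha_p\le\nu_q(N^{\phi(q)}-1)$, the choice $r=p_1^{\delta_{p_1}}P^{\delta_P}$ does give exact valuation $\alpha_0$, and Corollary~\ref{exponent} then forces every large term $N_n$ to have a prime factor with $\alpha_p=\alpha_0$. The gap is in the second half. Your claim that for each exceptional prime $p_{(i)}$ dividing some $N_n$ ``the lifting exponent lemma produces $n$ in that class with $\nu_{p_{(i)}}(N_n)$ arbitrarily large'' is not true in general. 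High divisibility of $N_n=q^{A+Cn}m_0+r$ by powers of $p$ requires $-r(q^{A}m_0)^{-1}$ to lie in the closure of $\{q^{Cn}\}_{n}$ in $\Z_p^{\times}$, which is $\bigcup_{a}q^{Ca}(1+p^{\gamma}\Z_p)$ with $p^{\gamma}\,\|\,q^{C\ord_p(q^C)}-1$; divisibility of a single term only gives membership in some $q^{Ca}(1+p\Z_p)$, which is strictly weaker as soon as $\gamma\ge 2$ (a Wieferich-type situation), and then $\nu_p(N_n)$ stays bounded below $\gamma$ for all $n$. This is exactly why Lemma~\ref{increase} carries the hypothesis $\delta>\gamma$, which you never verify and cannot in general arrange. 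Without it you obtain neither $f(p_{(i)}^k)=1$ for all large $k$ nor the density of $\{p_{(i)}^k\}$ in $1+q^{\alpha_0}\Z_q$, and your construction of $r^{*}$ rests entirely on these.

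Two further points are not routine even granting the above. First, the exact-valuation requirement for $r^{*}$ is a genuine obstruction: writing $p_{(i)}^{\delta_{p_{(i)}}\phi(q)}=1+u_iq^{\alpha_0}$, the product $\prod_i p_{(i)}^{\delta_{p_{(i)}}}$ can have $\nu_q$ of its $\phi(q)$-th power strictly larger than $\alpha_0$ because $\sum_i u_i$ may vanish modulo $q$, and primes with $\alpha_p>\alpha_0$ cannot repair this; the paper avoids needing any such exact control (and any high $p$-divisibility of the terms) by a different device — it puts several medium primes with strictly increasing $\alpha$'s into $r$, removes one at a time to form $r_i$, pigeonholes on the small-prime part $k^{S}$, and observes that of two compared indices at least one ratio fails to lie in $1+q^{\beta+1}\Z_q$, with a shift by $\Delta$ killing the medium part. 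Second, the primes $\le k_0$ (and the prime divisors of $q$ when $q$ is composite) are not mere bookkeeping: if some prime $p\le k_0$ with $\alpha_p=\alpha_0$ divides every term $q^{A^{*}+C^{*}n}m_0^{*}+r^{*}$ to an exponent $\delta$ with $f(p^{\delta})=1$ and $\alpha_{p,\delta}=\alpha_0$, your final contradiction disappears; controlling this is precisely what the paper's set $S$, the bound $D$ on exponents, the congruence $q^{\Delta}\equiv 1$, and the pigeonhole on $k^{S}$ are for. Finally, a smaller fix: for the last step you must take $p_{(1)},\dots,p_{(t)}$ to be \emph{all} primes with $\alpha_p=\alpha_0$ (finite by the contradiction hypothesis), not just those dividing the original $N_n$, since the distinguished prime factor of the new sequence need not occur among the old ones.
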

\begin{proof}
Suppose that $|\{ p \in \Ponepow : \alpha_{p} =
\alpha \}| < \infty$  for all $\alpha \in \N$, and consequently, for
any $\beta \in \N$ we have
\begin{equation}
  \label{eq:2}
|\{ p \in \Ponepow : \alpha_{p}  \leq \beta \}| < \infty.
\end{equation}
Let $S = (\Ponepow \cap [1,k_{0}]) \setminus \{q\},$ and
given cut off parameter $\beta \in \N,$ we define the sets of ``medium" and ``large" primes as follows: 
$$
M  = M_{\beta}:= 
\{ p \in \Ponepow : (p,q) = 1, p > k_{0},\  \alpha_{p}  \le \beta \}
$$
and
$$
L  = L_{\beta}:= 
\{ p \in \Ponepow : (p,q) = 1, p > k_{0},\  \alpha_{p} > \beta \}.
$$
By (\ref{eq:2}), we find that both sets $M$ and $S$ are finite.

Let
$$
r =  \prod_{p \in S} p^{e_{p}}  \cdot
\prod_{p \in M} p^{e_{p}} 
$$
with the exponents $e_{p}$ chosen as follows. 
\begin{itemize}
\item For $p \in S$, take $e_{p}=0$ if the set of exponents
$\{ \delta : f(p^{\delta}) = 1 \}$ is finite. Otherwise chose $e_{p}$
so that $p^{e_{p}} > k_{0}$ and $f(p^{e_{p}}) = 1.$ In both cases, there exists
$D=O(1)$ with the property that
for any $p \in S$ such that 
$p^{\delta} | q^{n}m_{0} + r$ for some $1\le m_{0} \le k_{0}$ and $n$ such
that  $f(q^{n}m_{0} + r) = 1$, we have $\delta \le D$.
\item For $p \in M$, let $e_{p}=\delta_{p}$ and observe that 
  $f(p^{e_{p}})=1$. 
\end{itemize}
We first choose $\Delta$ such that
$q^{\Delta} \equiv 1 \mod \prod_{p \in S} p^{D}.$
We then select primes $p_{1}, \ldots, p_{\l}\in M$ with $\l$ to be specified later,
such that $\alpha_{p_{i}} < \alpha_{p_{i+1}}$, and so that for any
$C \in [1, k_{0}]$, 
\begin{equation}\label{shift}
q^{C \Delta} \not \equiv 1 \mod p_{i}
\end{equation} for all
$i \in [1,l]$.  Note that such primes exist if we take $\beta$
sufficiently large. Further, we can assume that $\beta$ is large enough so
that  $\alpha_{p_{l}} < \beta$.
Finally, for $i \in \{1, \ldots, l\}$ we define
$$
r_{i} := \frac{r}{p_{i}^{e_{p_{i}}}}\cdot
$$

We note that $f(r_{i})=1$  for $1\le i  \le l.$ Corollary~\ref{exponent} now implies that there exists
$A_{i},C_{i}, m_{i}$ with $C_{i} \le s_{0}$, $m_{i} \le k_{0}$, and
$f(q^{A_{i}+C_{i}n}m_{i} + r_{i}) = 1$ for all $n\in\N$.
Now, if $k_{i} = k_{i}(n) = q^{A_{i}+C_{i}n}m_{i} + r_{i}$, for
$n > \beta+1$, write
$$
k_{i} = 
\prod_{p} p^{f_{p,i}}
=
k_{i}^{S}  k_i^{M} k_i^{L}
$$
where for a subset $X$ of primes, we set $k_{i}^{X} = \prod_{p \in X} p^{f_{p,i}}.$

For each $k_{i}^{S}$ we have $k_{i}^{S} = \prod_{p \in S} p^{f_{p,i}}$
with $f_{p,i} \le D$.  Thus the set of possible $k_{i}^{S}$'s
is finite, and 
there exists $i \neq j$ such that $k_{i}^{S} = k_{j}^{S} =: k^{S}$
(provided $l$ is chosen large enough; note that $l$ only depends
on the number of possible $k_{i}^{S}$, which in turn depends on $S$,
and hence $l=O(1)$).

For such defined $i\ne j,$ by the construction $$(r_{j}/k^{S})^{\phi(q)}: (r_{i}/k^{S})^{\phi(q)}=(p_{i}^{e_{p_{i}}}/p_{j}^{e_{p_{j}}})^{\phi(q)}$$ and since
$\alpha_{i} < \alpha_{j}<\beta,$
we have that at least one of $(r_{i}/k^{S})^{\phi(q)}$,
$(r_{j}/k^{S})^{\phi(q)}$ does not belong to $1+q^{\beta+1} \Z_q.$ 
Without loss of generality we may assume
\begin{equation}
\label{eq:3}
(r_{i}/k^{S})^{\phi(q)} \not \in 1+q^{\beta+1} \Z_q.
\end{equation}
Clearly $k_{i}$ is coprime to all $p \in M \setminus \{p_{i}\}$ and thus the only possibility is 
$k_{i}^{M} = p_{i}^{e}$ for some $e \geq 0$.  We note
that  replacing $n$ by $n+\Delta$ does not change $k_i^{S}(n)=k_i^{S}(n+\Delta)=k_{i}^{S}$, whereas by the choice~\eqref{shift}
$k_{i}$ cannot remain divisible by $p_{i},$  In particular we can
chose $n$ so that $k_{i}^{S} = k^{S}$ and $k_{i}^{M}=1$.  Hence,
provided $n$ is large enough, we find that $r_{i}/k^{S} = k_{i}^{L}
\mod q^{\beta+1}$, which  contradicts (\ref{eq:3}) since
$(k_{i}^{L})^{\phi(q)} \in 1+q^{\beta+1} \Z_q$.
\end{proof}
We also need the following simple lemma.
\begin{lemma}
\label{lem:generate-small-q-patch}
  If $|\{ p \in \Ponepow : \alpha_{p} = \alpha \}| = \infty$ for some
  $\alpha \in \Z^+$, then for any $\alpha_{1} > \alpha$ all elements
  in $1+q^{\alpha}\Z_q/1+q^{\alpha_{1}}\Z_q$ can be written as finite
  products of coprime elements $p^{\delta_{p}}$ for $p \in \Ponepow$.
\end{lemma}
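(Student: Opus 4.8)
The plan is to reduce the statement to a generation question inside the finite group $(\Z/q^{\alpha_{1}}\Z)^{\ast}$ and then settle that by the pigeonhole principle. First I would set $P_{\alpha} := \{ p \in \Ponepow : \alpha_{p} = \alpha\}$, which is infinite by hypothesis (and which I may assume avoids $q$, as this discards at most one prime), and record that for every $p \in P_{\alpha}$ the definitions of $\alpha_{p}$ and $\delta_{p}$ give $q^{\alpha} \,\|\, p^{\delta_{p}\phi(q)} - 1$, i.e.\ $p^{\delta_{p}\phi(q)} \equiv 1 \bmod q^{\alpha}$ but $\not\equiv 1 \bmod q^{\alpha+1}$. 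Fix $\alpha_{1} > \alpha$. Since the reduction map $P_{\alpha} \to (\Z/q^{\alpha_{1}}\Z)^{\ast}$, $p \mapsto p^{\delta_{p}} \bmod q^{\alpha_{1}}$, has finite target, some residue $c$ has an infinite preimage $P := \{ p \in P_{\alpha} : p^{\delta_{p}} \equiv c \bmod q^{\alpha_{1}} \}$. The key claim I would then isolate is that the cyclic subgroup $\langle c \rangle$ of $(\Z/q^{\alpha_{1}}\Z)^{\ast}$ already contains $H := 1 + q^{\alpha}\Z_{q}/1 + q^{\alpha_{1}}\Z_{q}$. Granting this, the proof finishes at once: in a finite group the positive powers of $c$ exhaust $\langle c\rangle$, so each $h \in H \subseteq \langle c\rangle$ equals $c^{j}$ for some $j \ge 1$; picking $j$ distinct primes $p_{1}, \ldots, p_{j}$ from the infinite set $P$ then yields $h \equiv \prod_{i=1}^{j} p_{i}^{\delta_{p_{i}}} \bmod q^{\alpha_{1}}$, a finite product of pairwise coprime elements $p^{\delta_{p}}$ with $p \in \Ponepow$, as required.

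To prove $H \subseteq \langle c \rangle$ I would work with $c^{\phi(q)}$, which by the first paragraph lies in $1 + q^{\alpha}\Z_{q}$ with $q$-adic valuation exactly $\alpha$. Suppose first that $q$ is an odd prime, so $(\Z/q^{\alpha_{1}}\Z)^{\ast}$ is cyclic and $H$ is its unique subgroup of order $q^{\alpha_{1}-\alpha}$. Writing $c^{\phi(q)} = 1 + q^{\alpha}u$ with $u$ a $q$-adic unit and applying the lifting-the-exponent lemma (cf.~\cite{wiki}), one gets $\nu_{q}\big((c^{\phi(q)})^{q^{k}} - 1\big) = \alpha + k$ for every $k \ge 0$, so $c^{\phi(q)}$ has order exactly $q^{\alpha_{1}-\alpha}$ in $(\Z/q^{\alpha_{1}}\Z)^{\ast}$; being an element of that order it generates $H$, whence $H = \langle c^{\phi(q)} \rangle \subseteq \langle c \rangle$. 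For $q = 2$ the same argument applies verbatim when $\alpha \ge 2$, since $1 + 2^{\alpha}\Z_{2}/1+2^{\alpha_{1}}\Z_{2}$ is again cyclic.

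The step I expect to demand the most care is the generation claim $H \subseteq \langle c \rangle$ when the relevant quotient is \emph{not} cyclic — concretely the $2$-adic component in the case $q = 2$, $\alpha = 1$, and more generally the case of composite $q$. There a single residue class $c$ need not generate $H$, and the remedy is to split $1 + q^{\alpha}\Z_{q}$ into its prime-power factors $1 + \ell^{a}\Z_{\ell}$, argue as above on each odd-$\ell$ factor, and handle the non-cyclic $2$-factor by a further application of the pigeonhole principle producing primes that realize both residues modulo $4$ (equivalently, by feeding the argument into the componentwise analysis of Section~\ref{sec:composite-q}). Throughout, the role of the hypothesis $|\{ p \in \Ponepow : \alpha_{p} = \alpha\}| = \infty$ is precisely to furnish, via pigeonhole, an inexhaustible supply of primes in the chosen residue class(es), which is exactly what permits replacing a high power $c^{j}$ by an honest product of coprime prime powers $p^{\delta_{p}}$.
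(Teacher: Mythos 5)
Your proof is correct and takes essentially the same route as the paper, whose entire proof is a pointer to the last claim of Lemma~\ref{increase}: you simply make explicit the two ingredients the paper leaves implicit, namely the pigeonhole extraction of infinitely many $p\in\Ponepow$ with $p^{\delta_p}$ in a single class $c$ (which is what converts powers $c^{j}$ into products of \emph{coprime} prime powers) and the lifting-the-exponent computation showing that $c^{\phi(q)}$, having $q$-adic valuation exactly $\alpha$ at $c^{\phi(q)}-1$, generates the cyclic group $1+q^{\alpha}\Z_q/1+q^{\alpha_1}\Z_q$. Your flag on the non-cyclic case is well taken (the paper is silent on it and defers composite $q$ to Section~\ref{sec:composite-q}), though note that for $q=2$, $\alpha=1$ every $p\in P_\alpha$ automatically has $p^{\delta_p}\equiv 3\bmod 4$, so ``primes realizing both residues mod $4$'' is not available and that sub-case would need a different patch.
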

\begin{proof}
The proof is essentially the same as of Lemma~\ref{increase}.
\end{proof}
We can now conclude  the proof.
\begin{proposition}
\label{prop:concluding-proof}
Let $f:\mathbb{N}\to\{0,1\}$ be multiplicative $q-$automatic
sequence and $|\Pzero|=\infty.$ Then $f(p)=0$ for
sufficiently large $p.$ 
\end{proposition}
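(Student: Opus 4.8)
The plan is to combine the two cases handled earlier. Recall that by the reductions made after Proposition~\ref{key1} it suffices to treat a binary-valued multiplicative $q$-automatic $f:\mathbb{N}\to\{0,1\}$ with $|\Pzero|=\infty$, and to show $f(p)=0$ for all sufficiently large $p$. If $|\Ponepow|<\infty$ then $f(p^e)=0$ for all but finitely many primes $p$ and all $e\ge 1$, and we are immediately done. So assume $|\Ponepow|=\infty$; the goal is to derive a contradiction with $|\Pzero|=\infty$, i.e.\ to show that in this regime $f(p)=1$ for \emph{all} sufficiently large primes $p$.

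First I would invoke Proposition~\ref{accumulation} to fix an $\alpha\in\N$ with $|\{p\in\Ponepow:\alpha_p=\alpha\}|=\infty$. By Lemma~\ref{lem:generate-small-q-patch}, for every $\alpha_1>\alpha$ each residue class in the finite group $1+q^{\alpha}\Z_q/1+q^{\alpha_1}\Z_q$ is represented by a finite product of coprime prime powers $p^{\delta_p}$ with $p\in\Ponepow$, hence $f$ is $1$ on an integer lying in each such class. Concretely: for any integer $N\equiv 1\pmod{q^{\alpha}}$ and any modulus $q^{\alpha_1}$, there is an integer $n$ with $f(n)=1$ and $n\equiv N\pmod{q^{\alpha_1}}$. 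Now take an arbitrary prime $p>qk_0^2$ with $(p,q)=1$ and consider $N:=p^{\phi(q^{\alpha+1})}\equiv 1\pmod{q^{\alpha+1}}$ (so $N\equiv 1 \pmod{q^\alpha}$), together with $\alpha_1$ chosen larger than $\alpha$ and than the $q$-adic parameter $\gamma=\gamma(p)$ appearing in Lemma~\ref{increase} for the relevant arithmetic progression. Then find, via the previous paragraph, an integer $r$ with $f(r)=1$, $r>qk_0^2$, and $r\equiv p^{\phi(q^{\alpha+1})}\pmod{q^{\alpha_1}}$; in particular $p\nmid r$ after adjusting by a $q$-power factor if needed.

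Next, apply Corollary~\ref{exponent} to this $r$ to obtain $A,C\le s_0$ and $m_0\le k_0$ (with $p\nmid m_0$, since $m_0\le k_0<p$) such that $f(q^{A+Cn}m_0+r)=1$ for all $n\in\N$. The point of the congruence condition on $r$ is to guarantee that $p^{\delta}\,\|\,q^{A+Cn_\delta}m_0+r$ for some $n_\delta$ and some $\delta>\gamma$: indeed $r$ was built to match the $p$-adic (and $q$-adic) behaviour of the target residue, so $q^{A}m_0+r$ already carries the desired high power of $p$ modulo $q^{\alpha_1}$. With that input, Lemma~\ref{increase} yields $f(p^k)=1$ for all $k\ge\gamma$; in particular $p\in\Ponepow$ and $f(p^{\delta_p})=1$. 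Applying this to every prime $p>\max(qk_0^2,\ \text{finitely many exceptions})$ shows $f(p)$ cannot be $0$ for large $p$ — more precisely, the argument shows $f(p^e)=1$ for some $e$, and one pushes down to $e=1$ either by the same lifting-the-exponent computation (which shows $f(p^k)=1$ for \emph{all} $k\ge\gamma$ once it holds for one large $k$, and then a further application handles small $k$) or by noting $p\notin\Pzero$ directly. This contradicts $|\Pzero|=\infty$, completing the proof.

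The main obstacle I anticipate is the bookkeeping in choosing $r$ so that the hypothesis of Lemma~\ref{increase} is genuinely met: one must simultaneously arrange $f(r)=1$, $r$ coprime to $p$, $r$ large, and $r$ congruent to a prescribed residue modulo a modulus $q^{\alpha_1}$ large enough relative to $\gamma(p)$ — and $\gamma(p)$ itself depends on $p$ through $\ord_p(q^C)$, with $C\le s_0$ only constrained after $r$ is fixed by Corollary~\ref{exponent}. This circularity is resolved by first fixing $p$, then observing $C$ ranges over the finite set $[1,s_0]$ so $\gamma(p)$ is bounded in terms of $p$ and $s_0$, choosing $\alpha_1$ accordingly, and only then selecting $r$; the uniformity ($s_0,k_0=O(1)$) ensures the "sufficiently large $p$" threshold is independent of $p$. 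The rest is the lifting-the-exponent / binomial-theorem manipulation already carried out in Lemma~\ref{increase}.
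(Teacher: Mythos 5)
Your reduction to the case $|\Ponepow|=\infty$ and the invocation of Proposition~\ref{accumulation} and Lemma~\ref{lem:generate-small-q-patch} match the paper, but from there your argument diverges and has a fatal gap. You try to show $f(p)=1$ for every large prime $p$ by feeding a suitable $r$ into Corollary~\ref{exponent} and then Lemma~\ref{increase}. The hypothesis of Lemma~\ref{increase} that you must verify is $p^{\delta}\,\|\,q^{A+Cn_{\delta}}m_{0}+r$ for some $n_{\delta}$ and some $\delta>\gamma$ --- a condition on $q^{A+Cn}m_{0}+r$ modulo powers of $p$. But the only thing your construction controls is the residue of $r$ modulo powers of $q$: that is all Lemma~\ref{lem:generate-small-q-patch} can deliver, namely a product of $\Ponepow$-prime-powers lying in a prescribed class of $1+q^{\alpha}\Z_q/1+q^{\alpha_{1}}\Z_q$. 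Choosing $r\equiv p^{\phi(q^{\alpha+1})}\pmod{q^{\alpha_{1}}}$ says nothing about $r\bmod p^{\delta}$; the phrase ``carries the desired high power of $p$ modulo $q^{\alpha_{1}}$'' conflates two independent congruence conditions. There is no reason the progression $q^{A+Cn}m_{0}+r$ is ever divisible by $p$ at all, let alone exactly by $p^{\delta}$ with $\delta>\gamma$, so Lemma~\ref{increase} cannot be applied. (In the paper that lemma's hypothesis is only ever arranged for finitely many primes that are built into $r$ by hand, where a prescribed power of the prime literally divides the construction --- exactly the control you lack for an arbitrary large $p$.) It is also telling that your argument never uses the hypothesis $|\Pzero|=\infty$ except as the thing to be contradicted.

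The missing idea is the paper's actual mechanism, which exploits $\Pzero$ directly: pick $k_{0}$ primes $p_{1},\dots,p_{k_{0}}\in\Pzero$, set $Q=\prod_{s}p_{s}$, and use the Chinese Remainder Theorem to find $r_{A}<q^{A}$ with $p_{s}\,\|\,r_{A}+q^{A}s$ for $1\le s\le k_{0}$; then $f(q^{A}s+r_{A})=0$ for $s=1,\dots,k_{0}$, and the $k_{0}$-property of automatic sequences upgrades this to $f(q^{A}n+r_{A})=0$ for all $n$. The same holds for every $r_{A,m}=r_{A}+mQ^{2}<q^{A}$, producing a set $N_{A}$ of residues mod $q^{A}$ forcing $f=0$ which surjects onto $\Z/q^{A-e}\Z$ once $q^{e}>Q^{2}$. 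Finally, for a large prime $t$ one finds $n\in N_{A}$ with $t\equiv n\bmod q^{A-e}$ and corrects by an element $m\in 1+q^{A-e}\Z_q/1+q^{A}\Z_q$, written via Lemma~\ref{lem:generate-small-q-patch} as a coprime product of $p^{\delta_{p}}$ with $f(p^{\delta_{p}})=1$, so that $tm\equiv n\bmod q^{A}$; then $0=f(tm)=f(t)f(m)=f(t)$. This is where $|\Pzero|=\infty$ and the generation lemma actually do their work, and your proposal does not reconstruct this step.
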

\begin{proof}
If $\Ponepow$ is finite, then to conclusion is immediate. Otherwise, Proposition~\ref{accumulation} implies that 
$|\{ p \in \Ponepow : \alpha_{p} = \alpha \}| = \infty$ for some
  $\alpha \in \Z^+.$
Given $q<p_{1}, \ldots, p_{k_{0}} \in  \Pzero$, i.e., primes such that
$f(p_{i}) = 0$, let
$$
Q := \prod_{i \le k_{0}} p_{i}.
$$
Given any $A$ such that
$q^{A} > 100Q^{2},$ select $r_{A}$ modulo $Q^2$ via the Chinese remainder theorem to satisfy
$$
r_{A} \equiv -sq^{A}+p_s \mod p^2_{s}, \quad 1 \le s \le k_{0}.
$$
For such $r_A,$ we have $ 0 < r_{A} < Q^{2} < q^{A}$ and  
 $p_{s} || r_{A} + q^{A}s.$ Hence $f(q^As+r_A)=0$
for $1\le s \le k_{0}.$ Moreover, for $m <
q^{A}/Q^{2} - 1$, we have
$$
r_{A,m} := r_{A} + m Q^{2} < q^{A}.
$$
Thus, as $f(q^{A}n + r_{A,M}) = 0$ for $1\le n \le k_{0},$ we find that 
in fact $f(q^{A}n + r_{A,M}) = 0$ for all $n\ge 1.$
Define the set $$N_{A} :=\{ r_{A,m} : m < q^{A}/Q^{2} -1\} \subset
\Z/q^{A}\Z,$$ and note that 
$m\mod q^{A} \in N_A$ implies that $f(m) = 0.$
%
%
We next show that $N_{A}$ has a certain multiplicative invariance
property: let $t$ be any prime such that $f(t) =1$.  Then, if
$n \in \Z$ has the property that $t \nmid n $ and
$t n \mod q^{A} \in N_{A}$, we find that
$$
0 = f(tn) = f(t) f(n)
$$
which forces $f(n)=0$.  In other words, if
$n \mod q^{A} \in t^{-1} N_{A}$, then $f(n)=0,$ provided that
$t \nmid n.$  

Now choose $e \in \N$ such that $q^{e} > Q^{2}$ and $A$ so that
$A-e > \alpha$, with $\alpha$ as in
Lemma~\ref{lem:generate-small-q-patch}. Applying
Lemma~\ref{lem:generate-small-q-patch} yields a finite 
set $Y \subset \Ponepow$ such that any element
$m \in 1+q^{A-e}\Z_q/1+q^{A}\Z_q$ can be written as
$$m = \prod_{p \in Y} p^{\epsilon_{p}\delta_{p}} \mod q^{A},$$ with
$\epsilon_{p} \in \{0,1\}$ and $f(p^{\epsilon_{p}\delta_{p}}) = 1.$ 

Let $t$ be a sufficiently large prime so that
$(t,q)=1$ and $t \not \in Y$ holds. Then $t \equiv n \mod q^{A-e}$
for some $n \in N_{A}$, and consequently there exists
$m \in 1+q^{A-e}\Z_q/1+q^{A}\Z_q$ such that $tm \equiv n \mod q^{A}$.

In particular, $tm \in N_{A}$ and thus, chosing $\epsilon_{p} \in
\{0,1\}$ such 
that $m \equiv \prod_{p \in Y} p^{\epsilon_{p}\delta_{p}} \mod q^{A}$, we
arrive at
$$
0 = f(tm) = f(t) f(m)
= f(t)
\prod_{p \in Y : \epsilon_{p} = 1} f(p^{\delta_{p}})
=
f(t).
$$
This concludes the proof.
\end{proof}

\subsection{The case when $q$ is composite.} 
\label{sec:composite-q}
We now give a brief outline how to modify the argument for the
non-prime case.
Let $q = \prod_{i=1}^{k} q_{i}^{e_{i}}$ with $q_{i}$ prime.  Take $e$
large enough that $q_{1}^{e} > Q^{2}$.
Then the image of set (cf. the proof of
Proposition~\ref{prop:concluding-proof}) 
$$
\{r_{A} + mQ^{2} \}_{m < q^{A}/Q^{2}-1}
$$
under the reduction modulo $q_{1}^{Ae_{i} - e}  \prod_{i=2}^{k}
q_{i}^{Ae_{i} }$ map is onto.  The same argument as before 
(i.e., define $\alpha_{p,\delta}$ by $q_{1}^{\alpha_{p,\delta}} ||
p^{\delta \phi(q_{1}^{e_{1}})} - 1$ and arguing as before in the proof
of Proposition~\ref{accumulation})
yields a finite 
set $Y \subset \Ponepow$ such that any element
$m \in 1+q_{1}^{A-e}\Z_{q_{1}}/1+q_{1}^{A}\Z_{q_{1}}$ can be written as
$$m = \prod_{p \in Y} p^{\epsilon_{p}\delta_{p}} \mod q_{1}^{A},$$ with
$\epsilon_{p} \in \{0,1\}$ and $f(p^{\epsilon_{p}\delta_{p}}) = 1$. 
The same argument used to prove
Proposition~\ref{prop:concluding-proof} now applies.   

\begin{remark}\label{general}
Our proof gives somewhat stronger conclusion, namely if
$|\Pzero|=\infty$ then $|\Ponepow|<\infty.$ From here one can easily get a more refined information about the set of values of $\{f(p^n)\}_{n\ge 1}.$ We leave the details to the interested reader.
\end{remark}

\nocite{BCK}
\bibliographystyle{alpha} \bibliography{Automatabib}

\end{document}